\documentclass{amsart}
 
\usepackage{graphicx} 
 
\newtheorem{theorem}{Theorem}[section]

\newtheorem{lemma}[theorem]{Lemma}
\newtheorem{definition}{Definition}[section]
\newtheorem{proposition}{Proposition}[section]

\begin{document}

\title{COLORED UNLINKING} %use all "COLORED UNLINKING" for running head

\author{NATALIE DUBOIS}

\address{npdubois25@gmail.com} 

\author{CHRIS EUFEMIA}

\address{cbeufemia@gmail.com}

\author{JEFF JOHANNES}

\address{State University of New York at Geneseo\\
1 College Circle\\
Geneseo, NY  14454\\
johannes@geneseo.edu}

\author{JENNA ZOMBACK}

\address{University of Illinois at Urbana-Champaign\\
1409 West Green Street\\
Urbana, IL 61801\\
zomback2@illinois.edu}

\maketitle

\begin{abstract}
In links with two components there are three different types of crossings: self-crossings in the first component, self crossings in the second component, and crossings between components. In this paper we examine the minimum number of crossing changes needed to unlink without changing the crossings between components. We restrict our attention to unlinking two component links with linking number zero and both components unknotted.  We provide data for links with no more than ten crossings and general results about asymmetry of unlinking between components.

\bigskip

\noindent {\em Keywords:}  Two-component links, unlinking, interchangeable

\bigskip

\noindent Mathematics Subject Classification 2000: 57M25, 57M27\end{abstract}

%%%%%%%%%%%%
\normalsize{
\section{Introduction}
The study of unknotting and unlinking reaches back to the infancy of knot theory [1, 2], and even into its prehistory with tales of untying the Gordian knot.  It is also a current 
practical study with natural applications to biological reactions [3, 4].  In particular, for physical links with different components of different materials, there is an important distinction to be made concerning which crossings are being modified.  In this paper we investigate the minimum number of crossing changes necessary to transform a two component link into the unlink. In doing this, we have two options for which type of crossings to change: we may change internal crossings (within one component), or external crossings (between the two components). Previous work by Peter Kohn [5] has not made this distinction when determining the minimum number of crossing changes necessary to produce the unlink. We, on the other hand, will determine the minimum number of \textit{internal} crossings necessary to produce the unlink. Given a two component link $L$, fix one component as the first component and the other as the second component. (In this paper the first component will be in bold in all diagrams.) 
\begin{definition}
We say that $L$ is \textit{(a,b)-unlinkable} if $a$ crossing changes in the first component and $b$ crossing changes in the second component can produce the unlink. \end{definition}
Clearly, if $L$ is $(a,b)$-unlinkable, $L$ is also $(c,d)$-unlinkable for any $c\geq a$ and $d \geq b$. Because of this, we are mostly concerned with the minimum values of $a$ and $b$ such that $L$ is $(a,b)$-unlinkable.
\begin{definition} If $L$ is $(a,b)$-unlinkable, we say that $(a,b)$ is a vector in the \textit{unlinking region} of $L$. \end{definition}

\section{Examples}
Consider the link $L10n57$ from the Thistlethwaite Link Table [6].   
It can be unlinked with only one first-component crossing change (if we change the lowest crossing in the diagram, this becomes the unlink).

\begin{figure}[th]
 \begin{center}\includegraphics[width=1.6in]{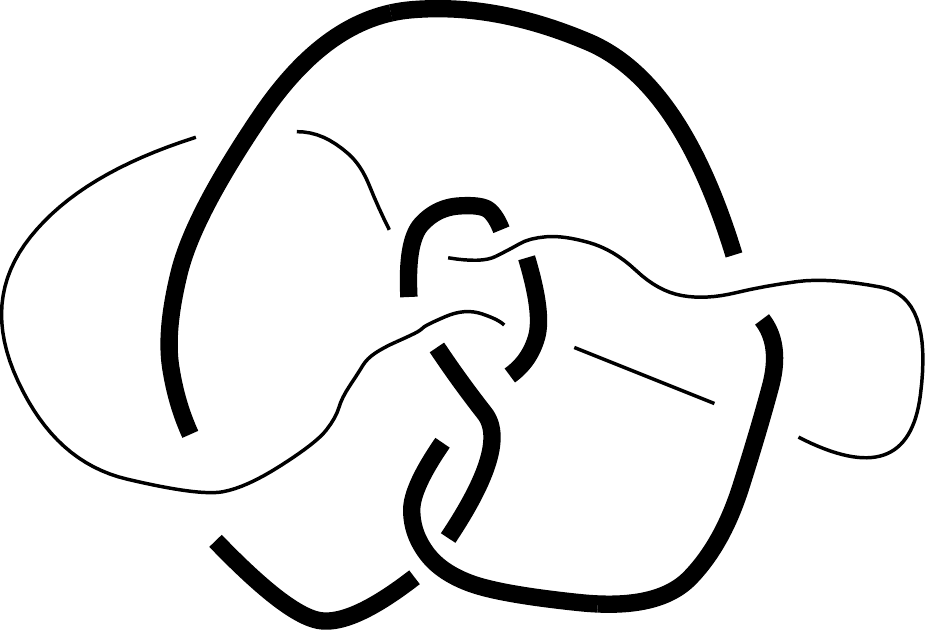}\end{center}
 \vspace*{8pt}
\caption{$L10n57$ \label{fig1}}
\end{figure}
\begin{figure}[th]
\begin{center}  \includegraphics[width=1.6in]{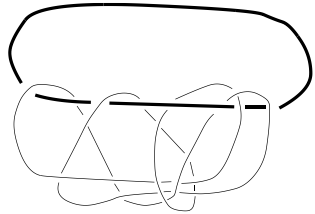} \end{center}
  \vspace*{8pt}
  \caption{$L10n57$ interchanged \label{fig2}}
  \end{figure}
  
  However, if we interchange the link so that the first component is a circle, we need at least four second-component crossing changes to unlink the link.  We do not know whether we can do this in four second-component crossing changes, but we do know that we can unlink this link in five, and that it cannot be done in three.
   
In this case, we might say that the unlinking region is asymmetric; in a sense, the second component is more ``linked" than the first component. After observing several examples like this one, we begin to wonder just how asymmetric these unlinking regions can be. We will prove later that the difference between $(a,0)$ unlinkability and $(0,b)$ unlinkability can be arbitrarily large. 

We want to make sure that we can always unlink $L$ by changing crossings in just one component, leaving the other component as it was. That is, we want to ensure that the links we examine are each $(a,0)$ and $(0,b)$ unlinkable for some positive integers $a$ and $b$. Therefore, we must place two restrictions on our links: that
each component is unknotted, and that the linking number is zero (since changing internal crossings does not affect the linking number). 
%%%%%%%%%%%%

For links with fewer than eleven crossings in their minimal diagrams, twenty-two have linking number zero and both components unknotted.  
A table of new results for these links follows.  In this table, the first column gives the Rolfsen [7] and Thistlethwaite [6] codes for the link.  The second column indicates whether the link is interchangeable.  The total unlinking numbers come from [5] and allow changing all types of crossings.  The upper bounds for the colored unlinking values are determined by experiment.  The lower bounds are determined in simple cases, such as $L5a1$ and $L7a4$, by the total unlinking number.  For the case of no crossings changed in one component we use Kohn's unlinking number restricted to a component, the method of computing the linking number of the two components lifted over one component in the two-fold cover branched over the other component. 

The link $L9a18$ provides a simple yet challenging example.  For this link it is apparent that changing three crossings in the standard diagram will unlink.  Because it is interchangeable, these three crossings can be distributed in any way between the two components.  The total unlinking number is 2, so it cannot be unlinked with merely one crossing in either component.  Use of branched covers reveals that the unlinking number restricted to each component is 3.  None of this rules out the apparently unlikely possibility that the link can be undone by changing one crossing within each component. 

The remaining column in the table is the coefficient on $z^3$ in the Conway polynomial for the link. Consider $L9a18$ again.  Using the crossing change formula in [8], we can show that it if were possible to unlink it by changing two self-crossings, we would have $(c_3(9^2_{10}) - c_3(L)) + (c_3(L) - c_3(0^2_1) = \pm k^2 + \pm j^2 = 3$.  Clearly this can only be satisfied  by either $2^2-1^2$ or by $-1^2 + 2^2$.  From this we see that it would need to be done by changing one positive crossing and one negative crossing.  It seems that there are several negative crossings in $L9a18$, so to untie it we must undo negative crossings.  Furthermore, if we change the positive crossing first, we will have an intermediate link, call this $L$.  If we smooth the crossing change between $L9a18$ and $L$ the lobes must each link the other component once (one positive, and one negative).  And, if we smooth the crossing change between $L$ and $L0a1$ the lobes must each link the other component twice (each sign).  Of course, $L$ must be linking number zero and unlinking number one.    None of this analysis suffices to rule out the possibility of one crossing in each component unlinking $L9a18$.  To avoid this subtlety we will henceforth only consider making all changes in one component or the other.  

\begin{table}[ht]
Unlinking data for up to ten crossings.
{\begin{tabular}{|c c| c c c c|}
\hline
Thistlethwaite & Rolfsen & Interchangeable & Total Unlinking & $|c_3|$ & Colored Unlinking\\
\hline 
$L0a1$ & $0^2_1$ & Y & 0 & 0 & (0,0) \\
$L5a1$ & $5^2_1$ & Y & 1 & 1 & (1,0), (0,1)\\
$L7a4$ & $7^2_3$ & Y & 2 & 2 & (2,0), (1,1), (0,2)\\
 $L7a1$ & $7^2_6$ &N & 2 & 1 & (2,0), (1,1), (0,3-4)\\
 $L8a1$ & $8^2_{13}$ &N & 2 &1 & (2,0), (1,1), (0,5-6)\\
$L9a40$ & $9^2_4$ & Y & 2 & 5 & (2,0), (1,1), (0,2)\\
$L9a38$ & $9^2_5$ & Y & 1 & 4 & (1,0), (0,1)\\
 $L9a35$ & $9^2_9$ &Y & 2 & 3 & (2,0), (1,1),(0,2)\\
 $L9a18$ & $9^2_{10}$ &Y & 2 & 3 & (3,0), (2,1),(1,1-2),(0,3)\\
 $L9a1$ & $9^2_{32}$ &N & 2 & 1 & (2,0),(1,1-4),(0,7-9)\\
 $L9a9$ &$9^2_{37}$ & N & 2 & 2 & (3,0),(2,0-1),(1,1-4), 0,6-9)\\
$L9a42$ &$9^2_{41}$ &  N & 2 & 3 & (2,0),(1,1),(0,3-4)\\
 $L10a21$ & & N & 2 & 0 & (2-3,0),(0,20)\\
 $L10a89$ & & Y & 2 & 0 & (2,0),(1,1),(0,2)\\
 $L10a90$ & & Y & 2 & 0 & (2,0),(1,1),(0,2)\\
 $L10a91$ & & Y & 2 & 2 & (2,0),(1,1),(0,2)\\
 $L10a95$ & & Y & 1 & 0 & (1,0),(0,1)\\
 $L10a103$ & & Y & 1 & 0 & (1,0),(0,1)\\
 $L10a111$ & & Y & 2 & 1 & (2,0),(1,1),(0,2)\\
 $L10a112$ & & N & 2 & 1 & (3,0),(1,1),(0,2)\\
 $L10a113$ & &Y & 3 & 1 & (3,0),(2,1),(1,2),(0,3)\\
 $L 10n57$ & & N & 1 & 4 & (1,0),(0,4-5)\\
 \hline
\end{tabular}}
\end{table}

%%%%%%%%%%%%%%%%%%%%%%%%%
\section{Upper bounds on unlinking number}
The method for determining upper bounds for the unlinking region of a link is conceptually simple; however, in practice it is often difficult to figure out which crossings to change to produce the unlink. The following is an algorithm that will always produce the unlink, and therefore gives an upper bound on $(a,0)$ and $(0,b)$ unlinking numbers. 
\begin{definition} We will say that $L$ is in \textit{parallel strands disk form} if one component is a geometric circle and the other component passes through the disk bounded by this circle in parallel strands, while the rest of the other component lies to the left of the circle. \end{definition}
Notice that any of our links can be put into this form because both of our components are unknotted.  Therefore we may transform one into a circle.  After doing this, putting the intersections into general position makes them discrete inside the circle.  Finally, combing the strands above and below the circle will eliminate crossings on one side and all remaining strands can be gathered together on the left.

To find an upper bound for the $(a,0)$ unlinking number, present it in parallel strands disk form with the second component as a circle. Starting anywhere on the first component, traverse the component, changing crossings as necessary so that depth strictly increases (goes into the page). 

Because the linking number is zero, intersections cannot alternate between the interior and exterior disk, therefore there must be two consecutive intersections for one of the disks.  This produces an innermost arc that may be pulled through after changing the crossings as described above; the crossings do not impede since they are descending into the page.  By repeating this process the link may be split.  Furthermore, the crossing information guarantees each component is unknotted.

Notice that we could equivalently change crossings so that depth into the page strictly decreases. This gives the following:
\begin{proposition}
Given a link in parallel strands disk form with respect to the first component, $(\lfloor\frac{\text{number of first-component crossings}}{2}\rfloor,0)$ is in the unlinking region of $L$. 
\end{proposition}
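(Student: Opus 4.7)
The plan is to combine the unlinking algorithm already sketched in the prose preceding the proposition with a short symmetry argument between its ``depth strictly increasing'' and ``depth strictly decreasing'' variants. I would put $L$ into parallel strands disk form with the second component as a circle, and let $n$ denote the number of self-crossings of the first component. Each self-crossing $c$ is visited twice as the first component is traversed; call $c$ \emph{good} if the second visit is already the under-strand (deeper into the page) and \emph{bad} otherwise. These two labels are complementary at each crossing, since ``second visit is the under-strand'' is a purely local condition on the diagrammatic data at $c$, so if $k$ of the $n$ self-crossings are bad then the remaining $n-k$ are good.

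Changing exactly the $k$ bad crossings produces a diagram in which depth strictly increases along the first component at every self-crossing, so the algorithm justified in the preceding prose applies: since the linking number is zero, two consecutive intersections must lie on the same side of the second-component disk, the intervening arc is innermost and unobstructed (the crossings descend into the page as required), and iterating splits $L$ while leaving both components unknotted. Hence $(k,0)$ lies in the unlinking region of $L$. The symmetric argument (explicitly noted in the excerpt) with ``strictly decreasing'' in place of ``strictly increasing'' shows that changing the $n-k$ good crossings likewise puts $(n-k,0)$ in the unlinking region.

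Monotonicity of $(a,0)$-unlinkability in $a$ then places $(\min(k,n-k),0)$ in the unlinking region, and $\min(k,n-k)\le\lfloor n/2\rfloor$ completes the proof. The only subtle point is the complementarity of the good/bad labels at each individual crossing: it must be a strictly local property of the diagrammatic data so that $k+(n-k)=n$ with no hidden dependence between crossings. Once that is checked, the correctness of the unlinking algorithm is already in hand from the paragraph preceding the proposition, and what remains is the single pigeonhole step above.
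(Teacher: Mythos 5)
Your proposal is correct and matches the paper's own (largely implicit) argument: the paper justifies the $\lfloor n/2\rfloor$ bound precisely by observing that making the first component descending and making it ascending require changing complementary sets of self-crossings, so one of the two sets has at most $\lfloor n/2\rfloor$ elements. You have simply made the good/bad partition and the pigeonhole step explicit, which the paper leaves to the remark ``we could equivalently change crossings so that depth into the page strictly decreases.''
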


%%%%%%%%%%%%
\section{Main Result}

We mentioned that $L10n57$ is $(1,0)$-unlinkable, but not $(0,3)$-unlinkable. In fact:
\begin{theorem}
Given any positive integer $d$, there exists a link $L$ that is $(a,0)$-unlinkable, but not $(0,a+d)$-unlinkable for some $a \in \mathbb{N}$. 
\end{theorem}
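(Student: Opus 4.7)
The plan is to exhibit a family of links $L_n$ whose asymmetry between first- and second-component unlinking grows linearly in $n$, so that for any fixed $d$ we can choose $n$ large enough that the gap exceeds $d$. The natural building block is $L10n57$ itself: the paper already establishes that it is $(1,0)$-unlinkable yet requires at least four second-component crossing changes to unlink, so a single copy already gives a gap of $3$ between $a$ and the minimum second-component unlinking count. If we can combine $n$ copies and show that the first-component unlinking number grows like $n$ while the second-component unlinking number grows like $4n$, the gap $3n-1$ will dominate any fixed $d$.

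To construct $L_n$, I would take the componentwise connect sum of $n$ copies of $L10n57$: band-sum the $n$ first components into a single unknot, and likewise band-sum the $n$ second components into a single unknot, using bands chosen to lie in small disjoint balls so that linking number remains zero and both components remain unknotted. The upper bound $(n,0)$-unlinkability is then immediate: perform inside each summand the single first-component crossing change that unlinks one copy of $L10n57$, and the resulting diagram becomes a componentwise connect sum of $n$ two-component unlinks, which is itself the unlink.

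The lower bound is the substantive part of the argument. I would appeal to the branched-cover obstruction referenced in the paper's discussion of the table, namely the linking number of the lifts of one component in the two-fold cyclic cover branched over the other, and observe that a single second-component crossing change shifts this invariant by a bounded amount. The essential claim is that this invariant behaves additively under componentwise connect sum along the branching component, so that the four-unit contribution from each copy of $L10n57$ accumulates, forcing at least $4n$ second-component crossing changes. Setting $a = n$ and choosing $n = \lceil (d+1)/3 \rceil$ then yields that $L_n$ is $(n,0)$-unlinkable but not $(0, n+d)$-unlinkable.

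The main obstacle I anticipate is justifying the additivity of the branched-cover lower bound under componentwise connect sum. One has to decompose the two-fold cover branched over the summed first component into a controlled assembly of the covers of the individual summands, and then check that the lifted linking numbers add correctly. If a clean additivity statement is not available, a natural backup is to replace the abstract connect-sum construction by a single explicit link containing $n$ spatially isolated tangles, each a copy of the local pattern responsible for the asymmetry of $L10n57$, so that the local branched-cover computations can be read off independently without invoking a general decomposition theorem.
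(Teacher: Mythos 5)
Your overall strategy is sound but takes a genuinely different route from the paper. The paper does not connect-sum copies of $L10n57$; it builds a twist family generalizing $L8a1$: inserting $n$ full twists leaves every member $(2,0)$-unlinkable (the same first-component crossing is changed twice), while an explicit diagram of the two-fold cover branched over the first component shows the two lifts of the second component have linking number $4n+2$, so at least $2n+1$ second-component changes are required; taking $n=d$ and noting $2d \geq 2+d$ for $d \geq 2$ (with $d=1$ handled separately) finishes the proof. The paper's version buys two things: both bounds are read off a single explicit diagram with no decomposition argument, and the first-component count stays uniformly at $2$, whereas in your construction $a$ grows with $n$ (harmless for the theorem as stated, since only the gap matters). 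Your arithmetic, granting your lower bound, is correct: $(n,0)$-unlinkable but not $(0,4n-1)$-unlinkable gives the result once $n \geq (d+1)/3$.

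The substantive gap is exactly the one you flag: the additivity of the lifted linking number under componentwise band sum, which carries your entire lower bound, is asserted rather than proved. It is true for a suitable choice of bands --- place the summands in disjoint balls, route both bands through the complementary region away from the branch curve, check that the banded first component is still an unknot so the branched cover is still $S^3$ and the second component still lifts to two circles, and observe that in the lifted diagram every crossing between the two lifts occurs in the preimage of one of the balls, so the signed counts add --- but this must actually be written down, and since a componentwise connect sum is band-dependent, the specific link $L_n$ must be pinned down before the claim even makes sense. (The same care is needed for your upper bound: after the $n$ crossing changes you have a band sum of split unlinks, which is the unlink only because the bands were chosen in the split region.) Your proposed fallback of a single explicit link containing $n$ isolated copies of the responsible tangle is, in effect, what the paper does; if you pursue that version you converge to the published argument, with the twist regions playing the role of the repeated tangle.
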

To prove this theorem, we investigate generalizations of link $L8a1$. We generalize the link $L8a1$ as shown below by inserting $n$ full twists in the bottom area of the link. 
\begin{figure}[th]
\begin{center}
\includegraphics[width=4cm]{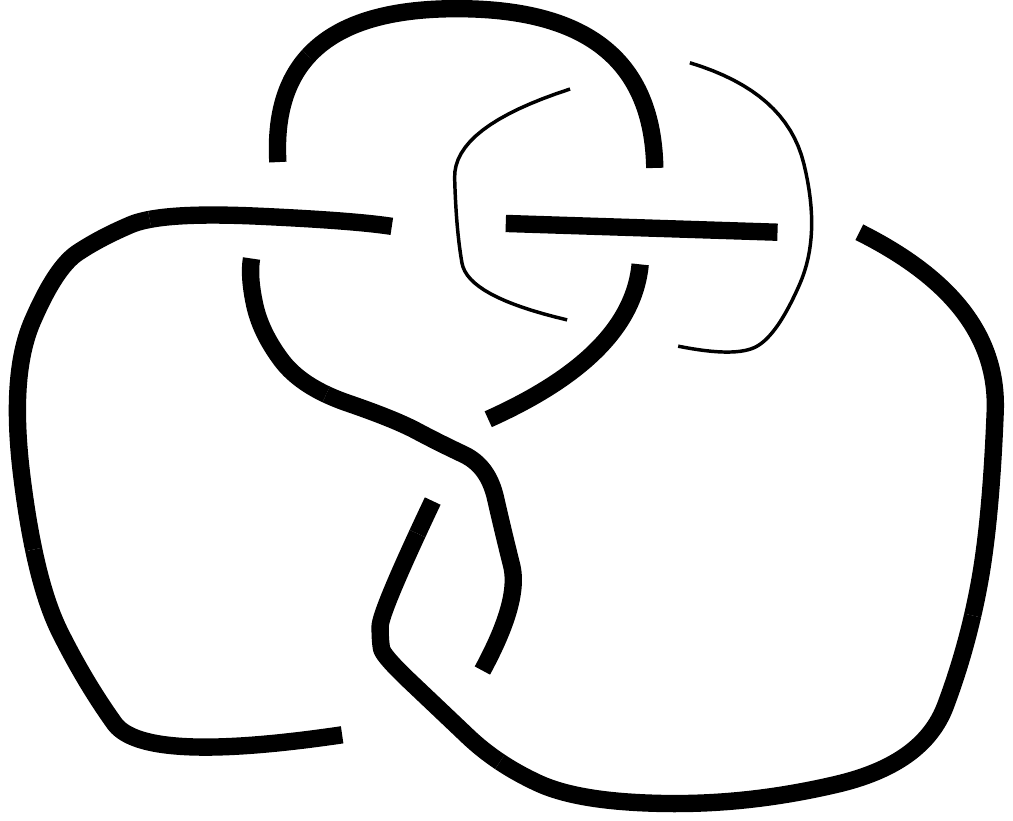}
\end{center}
\vspace*{8pt}
\caption{$L8a1$ \label{fig3}}
\end{figure}
\begin{figure}[th]
\begin{center}
\includegraphics[width=4cm]{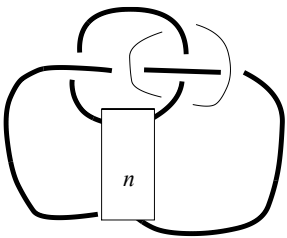} \end{center}
\vspace*{8pt}
\caption{Generalized $L8a1$ \label{fig4}}
\end{figure}
Notice that if we change the first component crossing in the middle of the projected second component, our components split, but the first component will be knotted. If we change the same first component crossing again, we return to the original first component (which was unknotted), so we will have the unlink. These two changes produce the unlink regardless of the value of $n$. So for any value of $n$, $(2,0)$ is in the unlinking region of the $n^{\text{th}}$ generalization of this link. We will investigate the two-fold cover branched over the first component in the $n^{\text{th}}$ generalization to show that as $n$ increases, so does the lower bound of second component crossings we need to change. This will guarantee that there are, in fact, arbitrarily large gaps in the unlinking regions of these links. 
%%%%%%%%%%%%

To find the branched cover, we will interchange the $n^{\text{th}}$ member of this family to make the first component a circle (here demonstrated for $L8a1$ with a schematic for the general case):  
\begin{figure}[th]
\begin{center}
\includegraphics[width=5cm]{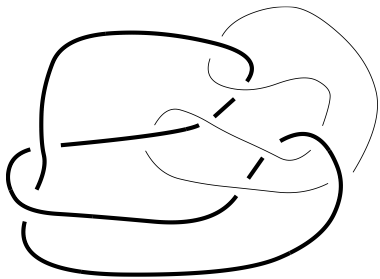}
\end{center}
\vspace*{8pt}
\caption{$L8a1$ interchanged step one \label{fig5}}
\end{figure}
\begin{figure}[th]
\begin{center}
\includegraphics[width=5cm]{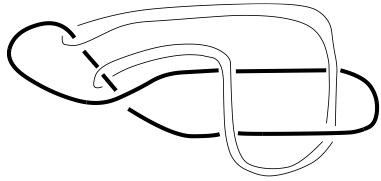}
\end{center}
\vspace*{8pt}
\caption{$L8a1$ interchanged step two \label{fig6}}
\end{figure}
\begin{figure}[th]
\begin{center}
\includegraphics[width=5cm]{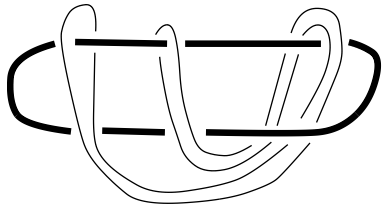}
\end{center}
\vspace*{8pt}
\caption{$L8a1$ interchanged complete \label{fig7}}
\end{figure}
\begin{figure}[th]
\begin{center}
\includegraphics[width=5cm]{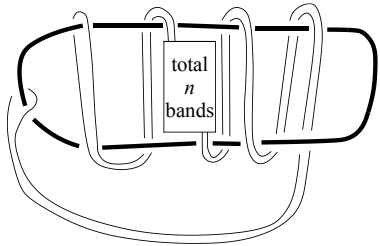}
\end{center}
\vspace*{8pt}
\caption{$L8a1$ generalization interchanged  \label{fig8}}
\end{figure}
\newpage 
\begin{lemma}
The $n^{th}$ member of the $L8a1$ family is not $(0,2n)$-unlinkable. 
\end{lemma}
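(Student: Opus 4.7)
The plan is to pass to the double cover of $S^3$ branched along the first component, exactly as the authors suggest. Working in the interchanged presentation (Figures \ref{fig5}--\ref{fig8}), I would present the $n$th member so that the first component $K_1$ is a flat circle bounding a disk $D$; since $K_1$ is unknotted, the cover $\Sigma_2(K_1)$ is again $S^3$ and can be realized concretely by cutting $S^3$ along $D$ and gluing two copies. Because $\mathrm{lk}(K_1,K_2)=0$, the preimage of the second component $K_2$ is a two-component link $A_n \sqcup B_n$ in $S^3$, interchanged by the deck involution $\tau$. Set $\lambda_n := \mathrm{lk}(A_n,B_n) \in \mathbb{Z}$; this is an invariant of the link modulo isotopy of $K_2$ in $S^3 \setminus K_1$.

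Next I would analyze the effect on $\lambda_n$ of a single crossing change in $K_2$. Such a crossing has a $\tau$-invariant preimage consisting of two crossings upstairs, and by the deck symmetry these lifted crossings are either both self-crossings (one in $A_n$, one in $B_n$) or both mixed crossings between $A_n$ and $B_n$. In the first case $\lambda_n$ is unchanged; in the second case each of the two lifted crossing changes alters the linking number with the same sign (since $\tau$ is orientation-preserving), so $\lambda_n$ changes by exactly $\pm 2$. For the unlink $A_n$ and $B_n$ would be a disjoint pair of unknots with $\lambda = 0$, so if the $n$th member were $(0,k)$-unlinkable we would need $|\lambda_n| \leq 2k$. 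Hence it is enough to show
\[
|\lambda_n| \geq 4n+2,
\]
which forces $k \geq 2n+1$ and so rules out $(0,2n)$-unlinkability.

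The remaining and principal step is to carry out this linking-number computation. Using the cut-and-paste description of the cover, each arc of $K_2$ lying off $D$ appears in both sheets, producing two parallel arcs (one in $A_n$, one in $B_n$), while arcs of $K_2$ that pass through $D$ are spliced across the gluing and therefore connect the two sheets. From this one obtains an explicit diagram for $A_n \sqcup B_n$ and can read $\lambda_n$ off by counting signed $A_n$-$B_n$ crossings, or equivalently by choosing a Seifert surface for $A_n$ obtained from a cabling of the disk picture and intersecting it with $B_n$. The crucial geometric observation will be that the inserted full twist in the bottom region occurs between two strands of $K_2$ that lift to different sheets, so each twist contributes a fixed nonzero even number to $\lambda_n$, producing linear growth in $n$ atop the base contribution from the $L8a1$ pattern.

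The main obstacle is precisely this combinatorial bookkeeping: one must verify that the two strands involved in each inserted twist really do lie in different components of $\tilde K_2$ (so the twist contributes to $\mathrm{lk}(A_n,B_n)$ rather than to self-linking), track the signs carefully through the lifting, and confirm that the growth rate is at least four per inserted full twist. Once this count is established, the $\pm 2$ crossing-change bound finishes the lemma immediately.
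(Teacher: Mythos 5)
Your approach is exactly the paper's: pass to the two-fold cover of $S^3$ branched over the unknotted first component, note that (since the linking number is zero) the preimage of the second component is a two-component link $A_n \sqcup B_n$ swapped by the deck involution $\tau$, and use the fact that a self-crossing change in $K_2$ lifts to a $\tau$-symmetric pair of crossing changes altering $\mathrm{lk}(A_n,B_n)$ by $0$ or $\pm 2$, so that $(0,k)$-unlinkability forces $|\mathrm{lk}(A_n,B_n)| \leq 2k$; you even name the correct target value $4n+2$. The gap is that you have not carried out the linking-number computation, and that computation is essentially the entire content of the paper's proof. The paper does it by first isotoping the interchanged link into parallel strands disk form (collecting the $2n+1$ half twists into one region), drawing the branched cover explicitly, and counting $8n+4$ crossings between $A_n$ and $B_n$ --- in each sheet, $2n+1$ from the twist region and $2n+1$ from where the second component loops around itself --- all of the same sign for a chosen orientation, giving $|\mathrm{lk}(A_n,B_n)| = 4n+2$. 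Your verbal description of how the count should go (arcs off the branching disk lift to parallel pairs, arcs through the disk splice the sheets, each inserted twist sits between strands lying in different lifted components) is the right picture, but you explicitly defer the bookkeeping that verifies it; as written, your argument establishes the reduction to the inequality $|\lambda_n| \geq 4n+2$ while leaving that inequality unproved, so the lemma is not yet established.
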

\begin{proof}
The following sequence of diagrams shows how to obtain a diagram of the $n^{\text{th}}$ member of the $L8a1$ family in parallel strands disk form with the first component a circle. 
One by one, we will slide the loops to the right of the first circle and below it, starting with the left-most loop. We will stop when we reach the first loop.
We now slide the first loop to the left of the diagram, and align the strands through the first component.

\begin{figure}[th]
\begin{center}
\includegraphics[width=5cm]{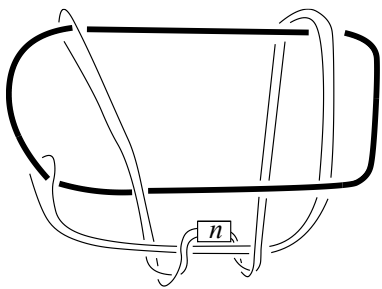}
\end{center}
\vspace*{8pt}
\caption{Parallel strands disk form step one \label{fig9}}
\end{figure}

\begin{figure}[th]
\begin{center}
\includegraphics[width=5cm]{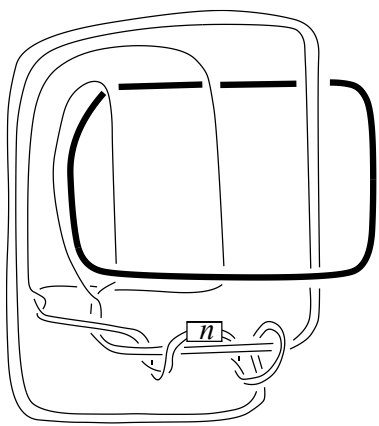}
\end{center}
\vspace*{8pt}
\caption{Parallel strands disk form step two \label{fig10}}
\end{figure}

Finally, we move all $2n+1$ half twists to the same portion of the diagram. The resulting diagram is in parallel strands disk form. 

\begin{figure}[th]
\begin{center}
\includegraphics[width=5cm]{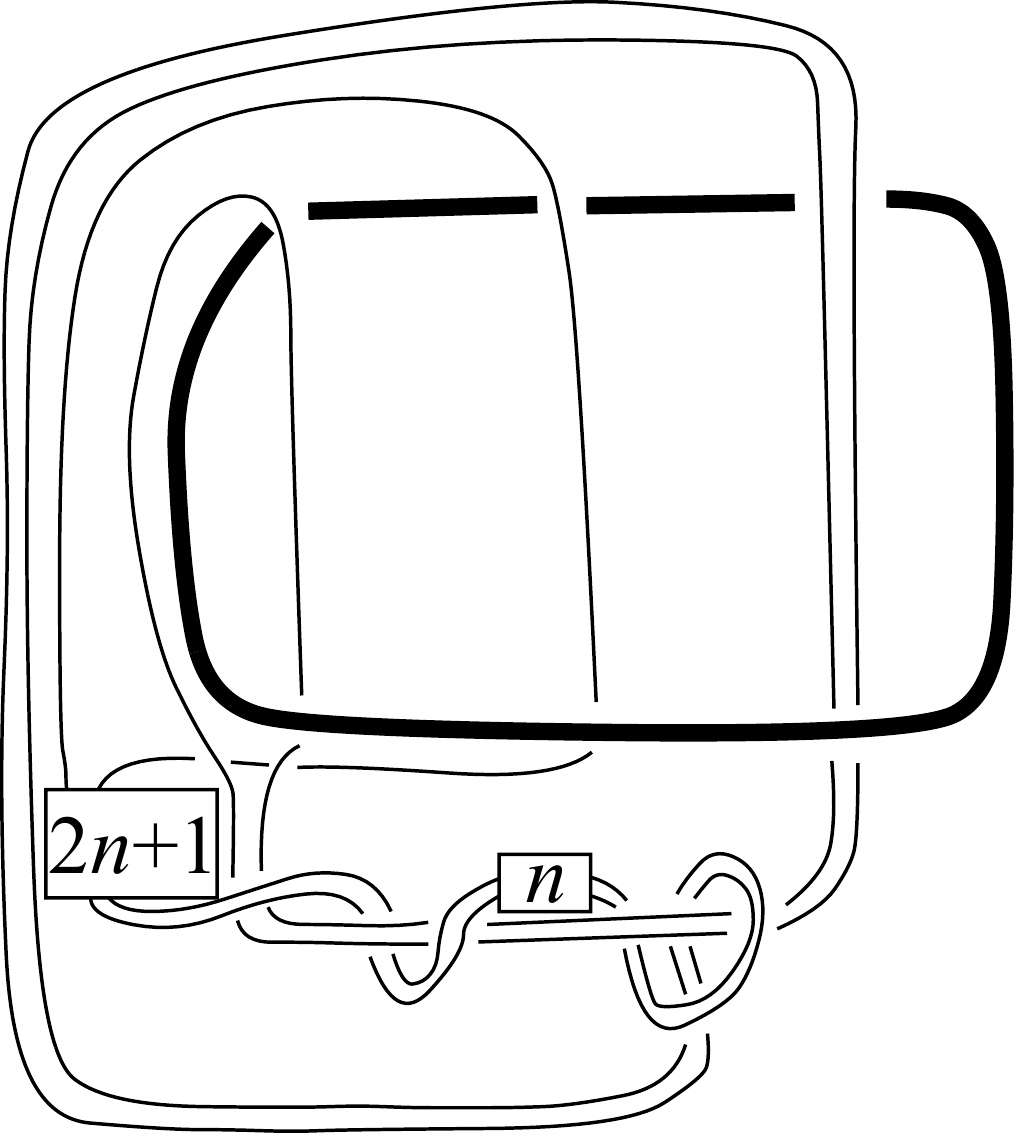}
\end{center}
\vspace*{8pt}
\caption{Parallel strands disk form complete \label{fig11}}
\end{figure}
We now discover the following two-fold branched cover of this link branched over the bold component. 
\begin{figure}[th]
\begin{center}
\includegraphics[width=10cm]{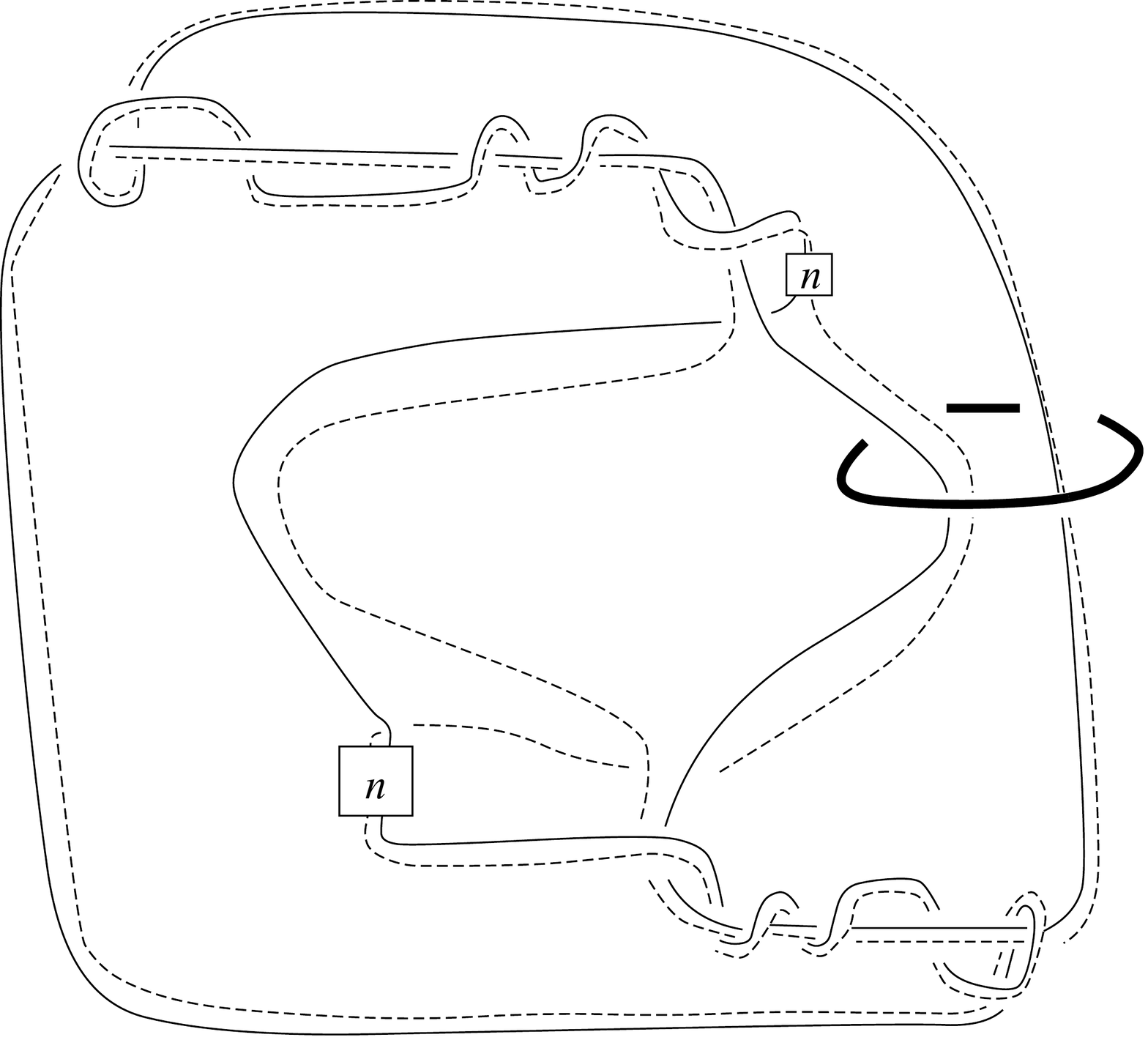}
\end{center}
\vspace*{8pt}
\caption{Two-fold branched cover \label{fig12}}
\end{figure}
Notice that there are $8n+4$ crossings between components: in each half of the diagram there are $2n+1$ external crossings from the twisting and $2n+1$ external crossings from where the second component looped around itself in the base link. If we assign an orientation to this link, we see that all of the external crossings are positive, so linking number is $4n+2$. Therefore, $(0,2n+1)$ is a lower bound for $(0,b)$ unlinkability. 
\end{proof}
\newpage

We now return to our theorem: 
\begin{theorem}
Given a positive integer $d$, there exists a link $L$ that is $(a,0)$-unlinkable, but not $(0,a+d)$-unlinkable for some $a \in \mathbb{N}$. 
\end{theorem}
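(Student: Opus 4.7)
The plan is to use the $L8a1$ family already constructed for the lemma, take $a = 2$ for every choice of $d$, and let the generalization index $n$ depend on $d$. The first step is to record the observation made in the paragraph preceding the lemma: for every $n$, changing the central first-component crossing of the $n^{\text{th}}$ generalization twice produces the unlink (the first change splits the link, and the second restores the first component to an unknot). This shows that every member of the family is $(2,0)$-unlinkable, so the $(a,0)$-unlinkable half of the conclusion is handled uniformly in $n$.

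Given a positive integer $d$, the second step is to pick $n$ large enough that $2n \geq a + d = d + 2$; for concreteness, $n = d + 1$ always works. The lemma then tells us that the $n^{\text{th}}$ generalization is not $(0, 2n)$-unlinkable. Invoking the monotonicity of the unlinking region noted after the first definition (namely, that $(a,b)$-unlinkability implies $(c,d)$-unlinkability for all $c \geq a$, $d \geq b$), the same link cannot be $(0, d+2)$-unlinkable either, because adding the missing $2n - (d+2)$ second-component crossing changes would otherwise contradict the lemma. Setting $L$ equal to the $(d+1)^{\text{st}}$ generalization and $a = 2$ yields a link with $(2,0)$ in its unlinking region and $(0, 2+d)$ outside it, which is exactly the theorem.

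The only care required is to apply monotonicity in the correct direction, since it is easy to confuse which inequalities propagate. The essential combinatorial and branched-cover work — producing the parallel strands disk form and reading off the linking number $4n + 2$ in the double cover — has already been done in the lemma, so there is no substantive obstacle left to overcome; the theorem is essentially a packaging of the lemma with a choice of $n$ as a function of $d$.
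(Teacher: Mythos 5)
Your proposal is correct and follows essentially the same route as the paper: take $a=2$, invoke the $(2,0)$-unlinkability of every member of the $L8a1$ family, apply Lemma 4.2, and use monotonicity of the unlinking region to pass from non-$(0,2n)$-unlinkability to non-$(0,d+2)$-unlinkability. The only (cosmetic) difference is that your uniform choice $n=d+1$ avoids the separate $d=1$ case that the paper handles by taking $n=d$ for $d\geq 2$ and $n=2$ for $d=1$.
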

\begin{proof}
Fix $d$. The $d^{\text{th}}$ member of the $L8a1$ family is $(2,0)$-unlinkable, but not $(0,2d)$ unlinkable (by Lemma 4.2). Since $2d \geq 2+d$ whenever $d \geq 2$, then the $d^{\text{th}}$ member of the $L8a1$ family is not $(0,2+d)$ unlinkable for $d \geq 2$. If $d=1$, the second member of the family is $(2,0)$ unlinkable, but not $(0,3)$ unlinkable.
\end{proof}
%%%%%%%%%%%%

%%%%%%%%%%%%%
\section{Future Research}

One method that may be worth pursuing for the case of changing crossings within both components (for which there is no method presented aside from the consideration of the Conway polynomial) is taking a $\mathbb{Z}_2 \times \mathbb{Z}_2$ cover branched twice over each of the components.  This method seems suited to the situation, and naturally generalizes the approach for unlinking restricted to components.  

We may also consider viewing unlinking as a step-by-step process of changing crossings one at a time, rather than a transition of changing all crossings simultaneously.  When doing so, the most obvious way to unlink often seems to require that the intermediate components of the links will be knotted. For instance, in the $L8a1$ link shown previously, changing the first component crossing in the middle of the second circle knots the first component. We wonder whether there are links that are $(a,0)$-unlinkable, but are not $(a,0)$-unlinkable if we require that the link does not go through knotting as crossings are consecutively changed. 

We are also considering the crossing changes necessary to change links with other linking numbers into a given base link.  Furthermore we could explore links with more than two components.  When doing so, we have the extra obstruction of link homotopy, which would separate into more refined base link classes.
%%%%%%%%%%%%

\end{document}